\newskip\@bigflushglue \@bigflushglue = -100pt plus 1fil
\def\bigcentering{\let\\\@centercr\rightskip\@bigflushglue
\leftskip\@bigflushglue
\parindent\z@\parfillskip\z@skip}
\newcommand{\Q}{\mathbb{Q}}
\newcommand{\suchthat}{\mid}
\renewcommand{\L}{\mathcal L}
\newcommand{\mix}{{\operatorname{mix}}}
\newcommand{\rel}{{\operatorname{rel}}}
\newcommand{\sort}{{\operatorname{sort}}}
\newtheorem{theorem}{Theorem}[section]
\newtheorem{proposition}[theorem]{Proposition}
\newtheorem{corollary}[theorem]{Corollary}
\newtheorem{conjecture}[theorem]{Conjecture}
{\theoremstyle{definition}
}
{\theoremstyle{remark}
}
{\theoremstyle{remark}
\newtheorem{example}[theorem]{Example}
}
\newtheorem{problem}{Problem}
\numberwithin{equation}{section}
\begin{document}

\title[Random-to-random shuffling on linear extensions]{Spectral gap for random-to-random shuffling on linear extensions}

\author[A. Ayyer]{Arvind Ayyer}
\address{Department of Mathematics, Indian Institute of Science, Bangalore - 560012, India.}
\email{arvind@math.iisc.ernet.in}

\author[A. Schilling]{Anne Schilling}
\address{Department of Mathematics, UC Davis, One Shields Ave., Davis, CA 95616-8633, U.S.A.}
\email{anne@math.ucdavis.edu}

\author[N. M. Thi\'ery]{Nicolas M.~Thi\'ery}
\address{Univ Paris-Sud, Laboratoire de Recherche en Informatique,
  Orsay, F-91405; CNRS, Orsay, F-91405, France}
\email{Nicolas.Thiery@u-psud.fr}

\begin{abstract}
In this paper, we propose a new Markov chain which generalizes random-to-random shuffling on permutations to
random-to-random shuffling on linear extensions of a finite poset of size $n$. We conjecture that the second largest 
eigenvalue of the transition matrix is bounded above by $(1+1/n)(1-2/n)$ with equality when the poset is disconnected.
This Markov chain provides a way to sample the linear extensions of the poset with a relaxation time bounded
above by $n^2/(n+2)$ and a mixing time of $O(n^2 \log n)$. 
We conjecture that the mixing time is in fact $O(n \log n)$ as for the usual random-to-random shuffling.
\end{abstract}

\maketitle

\section{Introduction}

The random-to-random shuffle removes a card at a uniformly random
position from a deck of $n$ distinct cards and replaces it in the deck at another uniformly random position. 
This gives rise to a random walk on the permutations of the symmetric group 
$S_n$ which has recently attracted a lot of attention. Random-to-random shuffling was originally described 
in~\cite[Section 8.2]{diaconis_saloff-coste.1995} and its mixing time 
is proved to be $O(n \log n)$ (see Section~\ref{section.mixing}). In his PhD
thesis, Uyemura-Reyes~\cite{reyes.2002} refined this result by providing an
upper bound of the order of $4n \log n$ and a lower bound of the order of $\frac{1}{2} n \log n$.
Uyemura-Reyes also gave some partial results and many conjectures about the eigenvalues of the transition matrix. 
Since then, the constants have been further improved in~\cite{saloff_coste_zuniga.2008,subag.2013, morris_qin.2014}.
One of the main motivations in the study of random-to-random shuffles is a conjecture of Diaconis~\cite{diaconis.2003} on 
the existence of a cutoff at $\frac{3}{4} n \log n$.
See~\cite{levin_peres_wilmer.2009} for details about the mixing time and the cutoff phenomenon.

In this paper we propose a generalization of the random-to-random shuffling Markov chain by extending it to linear extensions 
of a finite poset $P$ of size $n$. Denote the order of $P$ by $\preceq$ and 
assume that the vertices of $P$ are labeled by the integers in $[n]:=\{1,2,\ldots,n\}$.
A \emph{linear extension} of $P$ is a permutation $\pi$ in the symmetric group $S_n$
such that $\pi_{i} \prec \pi_{j}$ in $P$ implies $i<j$ as integers. We denote
by $\L(P)$ the set of all linear extensions of $P$.

Following~\cite{stanley.2009} we can now define analogues $\tau_i$ acting on
$\L(P)$ of the simple transpositions $s_i=(i,i+1)$ acting on the right
on $S_n$ (that is, on positions).  Namely, for $\pi \in \L(P)$ and
$i\in \{1,\dots,n-1\}$, set $\pi \cdot \tau_i:=\pi s_i$ if $\pi_i$ and
$\pi_{i+1}$ are incomparable in $P$, and $\pi \cdot \tau_i:=\pi$ otherwise.
That is, we interchange the two vertices at positions $i$ and $i+1$ if
$\pi s_i$ is still a linear extension of $P$ (see
Figure~\ref{figure.action} for examples). Note that, when $P$ is the
antichain (i.e. there are no relations between any elements of $P$),
$\L(P)=S_n$ and $\tau_i=s_i$.

A natural analogue of picking a card at position $i$ and inserting it
at position $j$ is then given by the operator $T_{i,j}$ defined as
\[
T_{i,j}:= \begin{cases}
\tau_i \tau_{i+1} \cdots \tau_{j-1} & \text{if $i\le j$,} \\ 
\tau_{i-1} \tau_{i-2} \cdots \tau_j & \text{if $i> j$,}
\end{cases}
\]
(see Figure~\ref{figure.action} for examples).
We define the \emph{$P$-random-to-random shuffle} 
of $\pi\in \L(P)$ by picking two positions $i,j\in [n]$
uniformly at random and applying $T_{i,j}$. The transition matrix $M=M_P$
of the $P$-random-to-random shuffle Markov chain is then given by
\begin{equation}
\label{equation.transition matrix}
M(\pi,\pi') = \frac{1}{n^2} \bigl| \{ (i,j) \in [n]\times [n] \mid \pi'= \pi \cdot T_{i,j} \} \bigr|\;.
\end{equation}

Again, if $P$ is the antichain, this reduces to the usual random-to-random transition matrix
(see for example~\cite[Section 5]{reyes.2002}), in which case the entries are $1/n$ on the diagonal,
$2/n^2$ if $\pi$ and $\pi'$ differ by a simple transposition, $1/n^2$ if they differ by any other transposition,
and zero otherwise.

\begin{figure}
  \begin{displaymath}
    \vcenter{\hbox{\includegraphics[scale=.8]{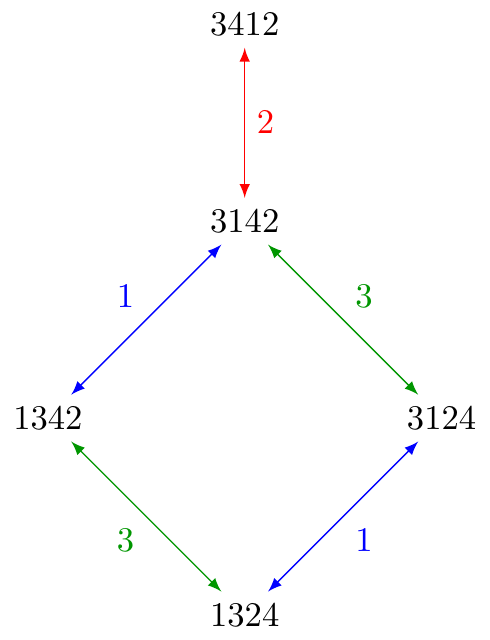}}}\qquad
    \vcenter{\hbox{\includegraphics{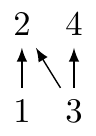}}}\qquad
    \vcenter{\hbox{\includegraphics[scale=.8]{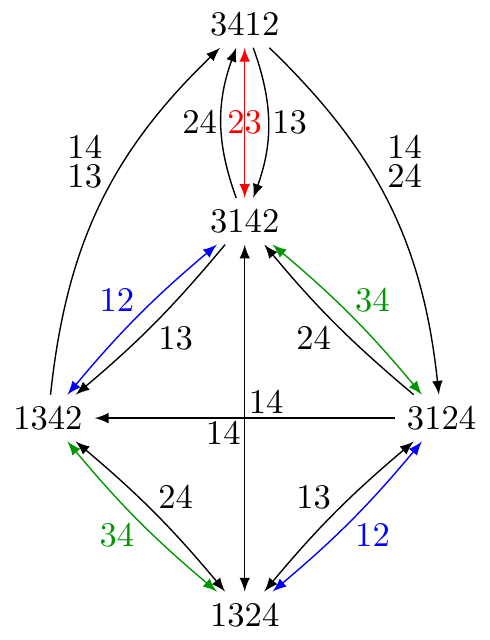}}}
  \end{displaymath}
  \caption{Graph of the action of the operators $\tau_i$ (on the left)
    and of the operators $T_{i,j}$ for $i < j$ (on the right) on the
    linear extensions of the poset depicted in the middle. 
    All loops are omitted (in particular, the
    operators $T_{i,i}$ do not appear). Using the fact that
    $T_{i,j}=T_{j,i}^{-1}$, the action of the operators $T_{i,j}$ for
    $i>j$ can be read off by reversing edges; for example,
    $3412 \cdot T_{4,1}=1342$. The operators $T_{i,i+1}=\tau_i$ are
    highlighted the same way as in the left picture.
    The graph on the right illustrates the $P$-random-to-random
    shuffle Markov chain.}
\label{figure.action}
\end{figure}

Note that the usual random-to-random transition matrix is $M=N^t N$, where $N$ is the transition matrix
for the random-to-top shuffle. Similarly for a general poset $P$, the above transition matrix $M_P=N_P^t N_P$ 
where $N_P$ is the transition matrix of the promotion Markov chain~\cite{ayyer_klee_schilling.2014}.

Our main conjecture is about the second-largest eigenvalue of the $P$-random-to-random shuffling transition matrix.
A poset $P$ is called \emph{disconnected} if its Hasse diagram is a disconnected graph, that is, there exist two elements
$x,y\in P$ such that there is no string of elements $x=x_0,x_1, x_2,\ldots, x_k=y$ such that $x_i$ and $x_{i+1}$ are comparable in $P$ for all $0\le i<k$.
We call $P$ trivial if it is the chain, since the chain has only one linear extension and there is no second-largest eigenvalue. 
For all other posets, we have the following conjecture.

\begin{conjecture}
\label{conjecture.main}
Let $P$ be a finite nontrivial poset of size $n$ and $M_P$
be the transition matrix~\eqref{equation.transition matrix}
of the $P$-random-to-random shuffling. Then, the second-largest eigenvalue $\lambda_2(P)$ of $M_P$ is bounded above by
\begin{equation}
\label{equation.second largest}
	\lambda_2(P) \le \left(1+\frac{1}{n} \right) \left(1-\frac{2}{n} \right)
\end{equation}
with equality if and only if the poset $P$ is
disconnected. Furthermore, all eigenvalues are non-negative.
\end{conjecture}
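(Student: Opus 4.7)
The plan is to reduce the three claims of the conjecture to the factorization $M_P = N_P^{t} N_P$ recalled just before the statement. This factorization makes $M_P$ positive semi-definite, immediately yielding non-negativity of all eigenvalues; moreover, the eigenvalues of $M_P$ are the squared singular values of $N_P$, so the desired bound rephrases as $\sigma_2(N_P) \le \sqrt{(n+1)(n-2)}/n$.

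For sharpness in the disconnected case $P = P_1 \sqcup P_2$, I would exhibit an explicit eigenvector. The candidate is
\[
f(\pi) = \sum_{a \in P_1}\Bigl(\mathrm{pos}_\pi(a) - \frac{n+1}{2}\Bigr),
\]
which depends only on the set $A(\pi) \subseteq [n]$ of positions occupied by $P_1$-elements. Since $P_1$-elements are pairwise incomparable with $P_2$-elements, cross-block adjacent swaps are never obstructed, while within-block swaps (executed or not) leave $A$ unchanged. A bookkeeping argument on the action of $T_{i,j}$ should then show that the push-forward of $M_P$ to the ``block-type'' variable $A$ coincides with the corresponding push-forward of the antichain random-to-random chain on $S_n$. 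Combined with a direct case analysis verifying that, on the antichain, the centered-position function of a single fixed element is an eigenvector with eigenvalue $(1+1/n)(1-2/n)$, this yields $M_P f = (1+1/n)(1-2/n)\, f$. The argument can be sanity-checked by hand in the smallest non-antichain example $P = (a\prec b)\sqcup\{c\}$, where $\L(P)=\{abc,acb,cab\}$ and $f = (-1,0,1)$ is indeed the $\frac{4}{9}$-eigenvector of the $3\times 3$ transition matrix.

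The upper bound in the connected case is the main obstacle. The natural variational formulation is to prove that for every $g$ orthogonal to the constants,
\[
\langle N_P g, N_P g \rangle \le \frac{(n+1)(n-2)}{n^2}\,\langle g,g\rangle,
\]
with strict inequality when $P$ is connected. Two strategies look promising. One is a comparison/lifting argument relating functions on $\L(P)$ to functions on $S_n$ via a suitable weighted extension, then invoking known eigenvalue information for the antichain chain. The other is a direct ``slack'' argument: each comparability $x\prec y$ of $P$ blocks certain $\tau_i$-moves, and one hopes to show that in a connected poset these obstructions accumulate into a strict spectral contraction beyond the disconnected bound.

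The principal difficulty is precisely this last step. The set $\L(P)$ carries no uniform representation-theoretic symmetry as $P$ varies, so neither Young's seminormal form nor character-theoretic diagonalization is directly available; extracting an eigenvalue bound valid for \emph{every} finite poset, and pinning down the strict inequality exactly when $P$ is connected, is the hard core of the conjecture.
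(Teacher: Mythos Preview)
First, a framing correction: the statement you are attempting is labeled a \emph{conjecture} in the paper, and the paper does not prove it in general. What the paper does establish is (i) the full conjecture for direct sums of chains (Proposition~\ref{proposition.direct_sums_of_chains}, by reducing to the $S_n$-representation-theoretic results of Uyemura-Reyes and Dieker--Saliola), (ii) the upper bound for $N$-shaped posets (Theorem~\ref{theorem.N}, via Wilkinson's interlacing theorem), and (iii) that $(1+1/n)(1-2/n)$ is an eigenvalue whenever $P$ is disconnected (Section~\ref{subsection.existence}, via a lumping map to a direct sum of chains). The general upper bound, and strictness for connected $P$, remain open in the paper.

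Your treatment of non-negativity is correct and in fact sharper than what the paper explicitly argues: the factorization $M_P = N_P^t N_P$ stated just before the conjecture makes $M_P$ positive semidefinite, so that part of the conjecture is immediate. The paper records the factorization but, somewhat curiously, still lists non-negativity as part of the open statement rather than drawing this consequence.

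For the disconnected case, your approach and the paper's are cousins but not identical. The paper defines a $\sort$ map $\L(P)\to\L(Q)$ to a direct sum of chains $Q$, checks that it intertwines the $\tau_i$, and then pulls back the eigenvalue from $M_Q$ using Dieker--Saliola. You instead propose an explicit eigenfunction built from the centered positions of one block; your ``push-forward to the block-type variable $A$'' is itself a lumping, essentially to the two-chain quotient, so the mechanisms agree at heart. The paper's route has the advantage that it imports the entire $M_Q$-spectrum (not just one eigenvalue) and leans on an established result; your route is more self-contained but requires verifying the eigenvector identity by hand, which you only sketch.

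On the upper bound you are candid that neither of your two strategies is carried through, and you correctly identify this as the crux. The paper's partial progress here---Wilkinson interlacing against a disconnected comparison poset---is a concrete technique you do not mention; it succeeds for $N$-shapes because the perturbation has rank two, but the paper notes that for general posets the rank grows and control of $\lambda_2$ is lost. So your proposal and the paper are in the same position: the inequality $\lambda_2(P) \le (1+1/n)(1-2/n)$ for arbitrary connected $P$ is genuinely open, and nothing in your outline closes that gap.
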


This statement includes the usual random-to-random Markov chain, when
$P$ is an antichain. For this special case, Uyemura-Reyes~\cite{reyes.2002}
proved that $(1+1/n)(1-2/n)$ appears as an eigenvalue of the
irreducible representation of $S_n$ indexed by the partition $(n-1,1)$
and therefore of $M$ itself. A complete proof of all eigenvalues of the random-to-random
Markov chain (which includes a proof that $(1+1/n)(1-2/n)$ is indeed the second largest eigenvalue) 
was recently given by Dieker and Saliola~\cite{dieker_saliola.2014}.

Usually precise formulas for the eigenvalues -- or even just the
second largest eigenvalue -- of the transition matrix are hard to obtain.
Since the largest eigenvalue of $M_P$ is one, the second largest eigenvalue determines the spectral gap
and the relaxation time. The mixing time can be bounded in terms of the relaxation time and we show
in Section~\ref{section.mixing} that the mixing time of the $P$-random-to-random Markov chain is
$O(n^2 \log n)$. This gives a faster sampling of the linear extensions of finite posets
than for the Markov chain of Bubley and Dyer~\cite{bubley_dyer.1999} (though, as discussed in Section~\ref{section.mixing},
the computational complexity is the same).

This paper is organized as follows. In Section~\ref{section.mixing} we explain the implications of
Conjecture~\ref{conjecture.main} to the relaxation and mixing time. In Section~\ref{section.evidence}
we provide computational evidence for the conjecture and prove it in special cases.

\subsection*{Acknowledgements}
We would like to thank Franco Saliola and Mike Zabrocki for helpful discussions. Many thanks to Peter Gibson
for suggesting the use of Wilkinson's theorem~\cite{wilkinson.1965, dancis_davis.1987} to prove the conjecture,
and Vincent Delecroix for helpful comments on the manuscript.

The authors would like to thank the Indian Institute for Science in Bangalore for hospitality, where part of this work was done. 
AA would like to acknowledge support in part by a UGC Centre for Advanced Study grant.
AS and NT were partially supported by the grant
DSTO/PAM/GR/1171 from the National Mathematics Institute for their trip to Bangalore.
AS was partially supported by NSF grant OCI--1147247 and the VI--MSS program sponsored by ICERM.
NT was partially supported by NSF grant OCI--1147247 for his visit to UC Davis, where part of this work was performed.

\section{Mixing time}
\label{section.mixing}

In this section we explain the implications of Conjecture~\ref{conjecture.main} for the mixing time of the 
$P$-random-to-random shuffling for a finite poset $P$.

\subsection{Spectral gap to mixing time}
\label{subsection.gap}

We begin by stating the main result of this section.

\begin{theorem}[Bound on the mixing time]
\label{theorem.mixing time}
  Let $P$ be a finite nontrivial poset of size $n$.
  Assuming Conjecture~\ref{conjecture.main}, the mixing time of the $P$-random-to-random Markov
  chain is $O(n^2 \log n)$.
\end{theorem}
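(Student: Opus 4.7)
The plan is to combine the bound on $\lambda_2$ from Conjecture~\ref{conjecture.main} with the standard spectral-gap-to-mixing-time inequality for reversible Markov chains, and to extract the factor $O(n\log n)$ from the crude estimate $|\L(P)|\le n!$ for the minimum stationary probability.

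First, I would check that $M_P$ is reversible with uniform stationary distribution on $\L(P)$. The key observation is $T_{i,j}^{-1}=T_{j,i}$, so the equation $\pi'=\pi\cdot T_{i,j}$ is equivalent to $\pi=\pi'\cdot T_{j,i}$; applying the bijection $(i,j)\leftrightarrow(j,i)$ inside \eqref{equation.transition matrix} shows that $M_P(\pi,\pi')=M_P(\pi',\pi)$. Hence $M_P$ is symmetric (doubly stochastic), and since the adjacent swaps $\tau_i=T_{i,i+1}$ already connect any two linear extensions, the chain is irreducible with unique stationary distribution $\mu$ uniform on $\L(P)$. In particular $\mu_{\min}=1/|\L(P)|\ge 1/n!$, so $\log(1/\mu_{\min})\le \log(n!)=O(n\log n)$.

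Next I would invoke the standard bound (see e.g.\ \cite[Theorem~12.3]{levin_peres_wilmer.2009})
\[
  t_{\mix}(\epsilon)\;\le\;\log\!\Bigl(\tfrac{1}{\epsilon\,\mu_{\min}}\Bigr)\cdot t_{\rel},\qquad t_{\rel}=\frac{1}{1-\lambda_*},
\]
where $\lambda_*=\max\{|\lambda|:\lambda\text{ eigenvalue of }M_P,\ \lambda\ne 1\}$. The factorization $M_P=N_P^tN_P$ recorded in the introduction makes $M_P$ positive semidefinite (which incidentally confirms the non-negativity assertion of Conjecture~\ref{conjecture.main}), so $\lambda_*=\lambda_2$. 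The $\lambda_2$ bound in Conjecture~\ref{conjecture.main} then yields
\[
  1-\lambda_2\;\ge\;1-\Bigl(1+\tfrac{1}{n}\Bigr)\Bigl(1-\tfrac{2}{n}\Bigr)\;=\;\frac{n+2}{n^2},
\qquad\text{hence}\qquad t_{\rel}\le \frac{n^2}{n+2}\le n.
\]

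Combining the two estimates, for any fixed $\epsilon>0$,
\[
  t_{\mix}(\epsilon)\;\le\;\bigl(\log(n!)+\log(1/\epsilon)\bigr)\cdot\frac{n^2}{n+2}\;=\;O(n^2\log n),
\]
which is the claim. The argument is essentially routine once Conjecture~\ref{conjecture.main} is granted; the only small subtlety is that the relaxation time should be bounded by $1/(1-\lambda_*)$ rather than $1/(1-\lambda_2)$, but the positivity of the spectrum, free from $M_P=N_P^tN_P$, makes the two coincide. The real work lies entirely in the conjecture itself, not in this reduction.
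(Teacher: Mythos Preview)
Your proof is correct and follows essentially the same route as the paper: bound the relaxation time via the conjectured spectral gap $1-\lambda_2\ge (n+2)/n^2$, bound $\log(1/\mu_{\min})$ by $\log(n!)=O(n\log n)$ using $|\L(P)|\le n!$, and plug both into the standard inequality~\eqref{equation.inequality}. Your write-up is in fact slightly more careful than the paper's, in that you explicitly verify symmetry of $M_P$ via $T_{i,j}^{-1}=T_{j,i}$ and observe that the factorization $M_P=N_P^tN_P$ already forces nonnegative spectrum (so that $\lambda_*=\lambda_2$), rather than invoking the nonnegativity clause of the conjecture for this.
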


Recall that the \emph{total variation distance} between two
distributions $\sigma$ and $\tau$ on a set $S$ is given by
\begin{displaymath}
  \|\sigma-\tau\| = \max_{A\subset S} |\sigma(A)-\tau(A)|\,.
\end{displaymath}

Let $M$ be the transition matrix of an irreducible and aperiodic discrete Markov
chain on a set $S$ (in particular $M$ is a row stochastic matrix). For
$x,y\in S$, the entry $M^k(x,y)$ denotes the probability of going from $x$ to
$y$ in $k$ steps of the Markov chain. The convergence of the Markov
chain to its stationary distribution $\pi$ is measured by
\begin{displaymath}
  d(k) = \max_{x\in S} || M^k(x,\cdot) - \pi(\cdot)||\,.
\end{displaymath}
By the Perron--Frobenius theorem, the convergence is exponential, that is, $d(k)\approx
c\lambda^k$ for some constants $c$ and $0\leq\lambda<1$.

The \emph{mixing time} of a Markov chain is defined by choosing some
$\varepsilon>0$, and setting $t_{\mix}(\varepsilon) := \inf \{t\geq 0\suchthat d(t) \leq \varepsilon\}$.

Assume now that $M$ is further reversible. Write
$\lambda_2=\lambda_2(M)$ for the second largest eigenvalue of $M$, and let
$\gamma:=1-\lambda_2$ be the \emph{spectral gap} of $M$. Finally, let
$t_\rel:=\frac1\gamma$ be the \emph{relaxation time}. Then, by~\cite[Theorem 12.4 and 12.3]{levin_peres_wilmer.2009}
\begin{equation}
\label{equation.inequality}
  (t_{\rel}-1)\log\left(\frac1{2\varepsilon}\right) \leq t_\mix(\varepsilon)\leq \log\left(\frac1{\varepsilon\pi_{\min}}\right)t_{\rel}\,,
\end{equation}
where $\pi_{\min}$ is the minimal value of the stationary distribution.

\begin{proof}[Proof of Theorem~\ref{theorem.mixing time}]
By Conjecture~\ref{conjecture.main} we have that $\gamma \ge \frac{1}{n} + \frac{2}{n^2}$
(with equality for disconnected posets). This implies that $t_\rel \le \frac{n^2}{n+2} \sim n$.
In addition, $\pi_{\min} = \frac{1}{|\mathcal{L}(P)|} \ge \frac{1}{n!} \sim (\frac{e}{n})^n$.
Using these in~\eqref{equation.inequality} we obtain
\begin{displaymath}
\log\left(\frac{1}{\pi_{\min}}\right) = O(n \log n)
\end{displaymath}
which implies $t_\mix(\varepsilon) =O(n^2 \log n)$ as desired.
\end{proof}

For the random-to-random Markov chain (i.e. when $P$ is the antichain) Diaconis and 
Saloff-Coste~\cite{diaconis_saloff-coste.1995} showed that the mixing time is $O(n \log n)$
as compared to $O(n^2 \log n)$ of Theorem~\ref{theorem.mixing time}.
The proof connects the mixing time of the random-to-random shuffle to a transposition Markov chain,
which was analyzed using the powerful machinery of $S_n$-representation theory,
since the transposition $t_{i,j}$ in $S_n$ can be written as a product of simple transpositions as
\begin{equation}
\label{equation.tij}
	t_{i,j}= \begin{cases} s_i s_{i+1} \cdots s_{j-2} s_{j-1} s_{j-2} \cdots s_i
	& \text{if $i\le j$,}\\
         s_{i-1} s_{i-2} \cdots s_{j+1} s_j s_{j+1} \cdots s_{i-1} & \text{otherwise.}
        \end{cases}
\end{equation}
In our setting of a general poset $P$, we can replace each $s_i$ by $\tau_i$ in~\eqref{equation.tij}.
This yields an analogue of the transposition Markov chain for general $P$. However, the operators $\tau_i$ 
do not satisfy the braid relations $(s_is_{i+1})^3=1$, which are replaced by the relation 
$(\tau_i\tau_{i+1})^6=1$~\cite{stanley.2009}.
Therefore, the representation theory of $S_n$ is no longer at our disposal. Instead, we would need
to study finite-dimensional representations of quotients of the infinite Coxeter group defined by the above relations
(which to our knowledge have not yet been studied in detail).

\subsection{Application to sampling of linear extensions}
Sampling the set of linear extensions of a poset is a fundamental problem in the theory of ordered sets and has important
implications in computer science by virtue of its connections with sorting. Bubley and Dyer~\cite{bubley_dyer.1999} define
a Markov chain on the set of linear extensions which exhibits a mixing time of $O(n^3 \log n)$, which they prove using the
method of path coupling. The mixing time of our chain of Theorem~\ref{theorem.mixing time} is at most $O(n^2 \log n)$,
which we believe can be even further improved to $O(n \log n)$ in analogy with the random-to-random chain
as explained in the previous subsection. 

\begin{conjecture}
\label{conjecture.improve}
  Let $P$ be a finite nontrivial poset of size $n$.
  The mixing time of the $P$-random-to-random Markov chain is $O(n \log n)$.
\end{conjecture}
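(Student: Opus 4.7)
The plan is to mimic the strategy used by Diaconis--Saloff-Coste for the antichain case while substituting poset-aware tools wherever $S_n$-representation theory is invoked. First I would exploit the factorization $M_P = N_P^t N_P$ to write the heat kernel in terms of the promotion chain $N_P$, and bound the $\ell^2$ distance
\begin{displaymath}
  4\|M_P^k(\pi,\cdot)-\pi\|_{\mathrm{TV}}^2 \leq \sum_{i\geq 2} \lambda_i^{2k}
\end{displaymath}
using a refined understanding of the full spectrum of $M_P$ beyond $\lambda_2$, rather than the crude spectral-gap inequality used in Theorem~\ref{theorem.mixing time}. The expectation -- suggested by the Dieker--Saliola description of the eigenvalues in the antichain case -- is that the eigenvalues of $M_P$ are all of the form $(1+\tfrac{1}{n})(1-\tfrac{k}{n})$ for $k\in\{0,1,\ldots,n\}$, with multiplicities controlled by the combinatorics of $P$. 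If such a description is established, a standard estimate shows that $\sum_{i\geq 2}\lambda_i^{2k}$ decays below $\varepsilon$ as soon as $k \gg n\log n$, delivering the desired mixing time.

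As a secondary route, I would pursue a direct coupling of Wilson type: couple two copies of the chain by applying the same uniform pair $(i,j)\in [n]\times [n]$ at each step, and track the set of positions at which the two copies disagree. A coupon-collector heuristic suggests each position is ``refreshed'' on a timescale of $n$, yielding coupling time $O(n\log n)$. The subtlety is that $T_{i,j}$ may act trivially when the intervening vertices are pairwise comparable in $P$, so the coupling can stall at positions constrained by the Hasse diagram. One would argue that the number of blocked pairs remains $O(n)$, for instance by showing that on average a constant fraction of adjacent pairs in a typical linear extension are incomparable, hence enough moves are effective to keep the coupon collector on track.

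The main obstacle in either approach is the absence of the braid relation $(s_is_{i+1})^3=1$, replaced by $(\tau_i\tau_{i+1})^6=1$ as noted in the paper: without it, there is no obvious replacement for Schur--Weyl duality or the RSK correspondence that pins down the spectrum in the antichain case. Concretely, completing the first route requires either an explicit diagonalization of $M_P$ using representations on $\L(P)$ of the quotient of the infinite Coxeter group by $(\tau_i\tau_{i+1})^6=1$ (which the authors remark has not been studied in detail), or a Diaconis--Saloff-Coste comparison argument~\cite{diaconis_saloff-coste.1995} expressing each $\tau$-transition as a short path of $s$-transitions in the antichain chain with congestion $O(1)$; the congestion bound is the delicate step because some $\tau$-transpositions correspond to moves that span long intervals once lifted to $S_n$. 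It is the proof of one of these ingredients, rather than the subsequent mixing-time deduction, that constitutes the heart of Conjecture~\ref{conjecture.improve}.
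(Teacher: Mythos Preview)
This statement is Conjecture~\ref{conjecture.improve} in the paper: it is \emph{not} proved there. The paper offers only heuristic and numerical support (the empirical averaging over random posets described immediately after the conjecture, and the diameter bound of Section~\ref{subsection.diameter}). There is therefore no ``paper's own proof'' to compare against.

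Your proposal is likewise not a proof but a research plan, and you are candid about this in your final paragraph. The two routes you sketch are reasonable directions, and you correctly identify the central obstruction (the failure of the braid relation and the consequent loss of $S_n$-representation theory, exactly as the paper notes in Section~\ref{subsection.gap}). Two specific gaps deserve mention. First, your expectation that the eigenvalues of $M_P$ are all of the form $(1+\tfrac{1}{n})(1-\tfrac{k}{n})$ is almost certainly too strong: Conjecture~\ref{conjecture.main} asserts that for \emph{connected} posets $\lambda_2(P)$ lies \emph{strictly} below $(1+\tfrac{1}{n})(1-\tfrac{2}{n})$, the numerical values in Figure~\ref{figure.second_largest_eigenvalue} do not sit on this grid, and even in the antichain case the Dieker--Saliola eigenvalues are indexed by pairs of partitions rather than a single integer $k$. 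Second, in the coupling route, the assertion that ``the number of blocked pairs remains $O(n)$'' is not yet an argument: the difficulty is precisely that $T_{i,j}$ may fail to carry the card from position $i$ all the way to $j$ when some intermediate $\tau_k$ acts trivially, so the coupon-collector heuristic does not transfer without a genuinely new idea controlling how often this happens. In short, what you have written is an accurate summary of why the conjecture is open, not a proof of it.
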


We have verified this conjecture on posets of small sizes by the following procedure. We fixed $\varepsilon = 0.1$. 
For sizes $n=5,\dots,9$, we constructed 100 random posets in each case by adding order relations between pairs 
of elements with probability $1/n$ subject to consistency. Note that we need the probability to be small to avoid 
getting trivial posets. For each poset, we started with the identity element in the set of linear extensions, and calculated 
the mixing time. We averaged over the mixing time over 100 posets for each $n$, plotted this average as a 
function of $n$, and noted that the trend is consistent with $n \log n$.
Further evidence is given in Section~\ref{subsection.diameter}.

For a fair comparison, we now also compare the algorithmic complexity
of the chain by Bubley and Dyer and our chain. With our notation, and acting on the right rather
than on the left, each step of the Markov chain of Bubley and Dyer can
equivalently be defined as follows: choose $c\in \{0,1\}$ uniformly at
random and an index $i\in \{1,2,\ldots,n-1\}$ according to some
specified probability distribution; if $c=1$, apply $\tau_i$ on the
current state $\pi$ and otherwise leave $\pi$ unchanged. The role of
$c$ is to make the chain aperiodic, but it does not much influence the
order of magnitude of the mixing time or the algorithmic complexity.

Given that the operators $T_{i,j}$ in our $P$-random-to-random Markov
chain are products of $O(n)$ of the operators $\tau_k$, we see that the cost of each
step of our chain is $O(n)$ times that of Bubley and Dyer's
chain. Hence a mixing time of $O(n^2\log n)$ would not be an
improvement in terms of algorithmic complexity, but $O(n\log n)$
as stated in Conjecture~\ref{conjecture.improve} definitely would be.

It can further be noted that the probability of applying $\tau_k$ at
some intermediate step of the uniform $P$-random-to-random chain
matches with the probability distribution specified by Bubley and
Dyer. Hence the two chains are very similar except that, in the
$P$-random-to-random chain, the $\tau_k$'s are forced to be grouped in
certain sequences.

\section{Evidence}
\label{section.evidence}

In this section, we provide various kinds of evidence for the validity of Conjecture~\ref{conjecture.main}.
We report on systematic numerical checks in Section~\ref{subsection.comp}. We then prove the conjecture for 
two special families of posets in Sections~\ref{subsection.directsum} and~\ref{subsection.nshape}. Our main result 
about the existence of the second-largest eigenvalue for disconnected posets is given in Section~\ref{subsection.existence}. 
We discuss further properties in Section~\ref{subsection.properties} and conclude in Section~\ref{subsection.diameter}
by showing that the diameter of the $P$-random-to-random Markov chain is small, and thus not an obstruction 
to Conjecture~\ref{conjecture.improve}.

\subsection{Computational evidence}
\label{subsection.comp}
Conjecture~\ref{conjecture.main} has been checked numerically (with double precision) for all 2451 posets of size 
$n\leq 7$, and many posets of size $8$ using \texttt{Sage}~\cite{sage,sage-combinat}. The second largest eigenvalue 
for all posets of size $n\leq 5$ is available in Figure~\ref{figure.second_largest_eigenvalue}.

\subsection{Proof for direct sums of chains}
\label{subsection.directsum}

In this section, we prove the conjecture for direct sums of chains, as
a straightforward consequence of~\cite{reyes.2002}
and~\cite{dieker_saliola.2014} for the antichain.  In particular, we
use, as proven in~\cite{dieker_saliola.2014}, that for any $n$
the random-to-random shuffling transition matrix on $n$ vertices has
second largest eigenvalue $(1+1/n)(1-2/n)$ and all eigenvalues are non-negative.

\begin{proposition}
  \label{proposition.direct_sums_of_chains}
  Conjecture~\ref{conjecture.main} holds for any direct sum of chains.
\end{proposition}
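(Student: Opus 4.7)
The plan is to realize $M_P$ as a quotient of the usual random-to-random chain $M$ on $S_n$, and then invoke the spectral results of Dieker--Saliola~\cite{dieker_saliola.2014}. Write $P = C_{a_1}\oplus\cdots\oplus C_{a_k}$ with $a_1 + \cdots + a_k = n$, and let $H = S_{a_1}\times\cdots\times S_{a_k} \subseteq S_n$ be the Young subgroup that permutes labels within each chain. For $\sigma\in S_n$, form its \emph{chain-word} $w(\sigma)\in\{1,\ldots,k\}^n$ whose $i$-th entry records the index of the chain containing $\sigma_i$. One checks that $w(\sigma) = w(\sigma')$ if and only if $\sigma' \in H\sigma$, and that each left coset $H\sigma$ contains a unique linear extension of $P$. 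This yields a bijection $\L(P) \leftrightarrow H\backslash S_n$.

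The key technical step is to check that under this bijection $M_P$ coincides with the lumping of $M$ induced by the left $H$-action. Because each $T_{i,j}$ acts on $S_n$ by right multiplication, it commutes with the left $H$-action, so $M$ descends to a well-defined chain $\overline M$ on $H\backslash S_n$. To identify $\overline M$ with $M_P$, the main observation is that if $\sigma_i$ and $\sigma_{i+1}$ lie in the same chain of $P$, then $\sigma s_i$ differs from $\sigma$ by swapping two labels within one chain, i.e.\ by an element of $H$ acting on the left; hence $s_i$ on $S_n$ descends to the identity on the coset space, matching the definition of $\tau_i$ on $\L(P)$. Composing, the descended action of $T_{i,j}$ on chain-words is precisely ``pick up the letter at position $i$ and reinsert it at position $j$'', which is also readily verified to describe $T_{i,j}^P$ on $\L(P)$. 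This bookkeeping---tracking how intermediate $\tau_k$'s that act trivially on $\L(P)$ correspond to $H$-translations on $S_n$---is where I expect the main work to lie.

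With lumpability in hand, the eigenvalues of $M_P$ form a sub-multiset of those of $M$, namely those appearing on the $H$-invariant subspace of $\mathbb{C}[S_n]$. Dieker--Saliola show that $M$ has non-negative spectrum and that $\lambda_2(M) = (1+1/n)(1-2/n)$ is realized on the $S_n$-irreducible $V_{(n-1,1)}$. Hence $M_P$ has non-negative spectrum and $\lambda_2(P) \leq (1+1/n)(1-2/n)$. For the equality claim, Young's rule identifies the dimension of the $H$-invariants in $V_{(n-1,1)}$ with the Kostka number $K_{(n-1,1),(a_1,\ldots,a_k)}$. A direct count of semistandard Young tableaux of shape $(n-1,1)$ and content $(a_1,\ldots,a_k)$ yields $k-1$: the bottom cell may be filled by any entry in $\{2,\ldots,k\}$, and the top row is then forced by the content. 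Therefore the bound is attained precisely when $k\geq 2$, i.e.\ precisely when $P$ is disconnected (the case $k=1$ being the excluded trivial chain).
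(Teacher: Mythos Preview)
Your proof is correct and follows essentially the same approach as the paper: both identify $\L(P)$ with the coset space $H\backslash S_n$ (equivalently, the permutation module $\mathrm{Ind}_{S_\mu}^{S_n}\mathbf{1}$), observe that the eigenvalues of $M_P$ are therefore a subset of those of the full random-to-random chain on $S_n$, and then invoke Dieker--Saliola for the bound and Uyemura-Reyes for the appearance of $(1+1/n)(1-2/n)$ on $V_{(n-1,1)}$. The paper phrases the identification via the fact that the $\tau_i$ satisfy the braid relations for direct sums of chains (citing~\cite{ayyer_klee_schilling.2014}), whereas you build the lumping directly from the left $H$-action; your Kostka number computation $K_{(n-1,1),(a_1,\dots,a_k)}=k-1$ for the equality case is a bit more explicit than the paper's appeal to ``basic representation theory'', but the content is the same.
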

\begin{proof}
  Let $P$ be a finite poset with $|P|=n$. The operators $\tau_i$ satisfy the same relations as the elementary
  transpositions $s_i$ except that, in general, the braid relation $(s_is_{i+1})^3=1$ is replaced by
  $(\tau_i\tau_{i+1})^6=1$~\cite{stanley.2009}. The usual braid relations hold
  if and only if the poset is a direct sum of chains~\cite[Proposition 2.2]{ayyer_klee_schilling.2014}.
  Hence, in this case we can still use the representation theory of the
  symmetric group to study the Markov chain.

  Let $P$ be a direct sum of chains $C_1\oplus\cdots\oplus C_\ell$.  The
  representation of $S_n$ given by the action of $\tau_i$ on $\L(P)$
  is easily identified. Namely, not swapping two vertices that are in
  the same chain is equivalent to considering these two vertices as
  equivalent. In other words, we are considering a deck of $n$ cards
  with repetitions, each chain $C_i$ contributing $|C_i|$ identical
  cards.

  Without loss of generality, we may assume that $\mu=(|C_1|,\ldots,|C_\ell|)$ is weakly
  decreasing, and thus a partition of $n$, and that the vertices of the chains are numbered 
  consecutively starting by labeling $C_1$ with the numbers $\{1,\ldots,|C_1|\}$, labeling $C_2$ with
  $\{|C_1|+1,\ldots, |C_1|+|C_2|\}$ etc.. With this notation, $\L(P)$
  can be identified with the right quotient $S_n/S_\mu$
  of the symmetric group by the parabolic subgroup $S_\mu$, whose
  linear span is the induced representation $V_\mu:= \mathrm{Ind}_{S_\mu}^{S_n} 1$.
  The usual random-to-random Markov chain (when $P$ is the antichain) corresponds to the regular
  representation $V_{(1,\ldots,1)} = \Q S_n$.

  If $P$ is made of a single chain, we are done. Otherwise, basic
  representation theory states (see for example~\cite[Chapter 2]{sagan.2001}) that the simple module indexed by the partition $(n-1,1)$
  appears in $V_\mu$. Therefore, by~\cite{reyes.2002,dieker_saliola.2014}, $(1+1/n)(1-2/n)$ is an
  eigenvalue of $M$ on $V_\mu$.

  To conclude that $(1+1/n)(1-2/n)$ is indeed the second largest
  eigenvalue $\lambda_2(P)$ and that all eigenvalues are non-negative, we
  use that $V_\mu$ is a submodule of the regular representation
  $\Q S_n$ together with the assumption that the same statement holds there.
\end{proof}

\begin{problem}
  As discussed in Section~\ref{subsection.gap}, the $P$-random-to-random Markov chain always comes
  from a finite-dimensional representation $V$ of a (finite quotient)
  of an infinite Coxeter group. Could this be used to describe the
  simple modules $S_\mu$ appearing in $V$ and the eigenvalues of $M$
  on $V_\mu$ for general $P$?
\end{problem}

\subsection{Proof of Conjecture~\ref{conjecture.main} for $N$-shaped posets}
\label{subsection.nshape}

We now prove the bound on the second largest eigenvalue for an infinite family of posets.
Let $P$ be a poset made of two chains, where the first chain is labeled $1,2,\ldots,k$ from bottom to top and
the second chain is labeled $k+1,k+2,\ldots,n$ from bottom to top. Build $P_1$ from $P$ by adding an edge
from the bottom $k+1$ of the second chain to the top $k$ of the first chain. Similarly, let $P_2$ be obtained from 
$P$ by adding an edge from the top $k$ of the first chain to the bottom $k+1$ of the second chain.
Note that $P_2$ is in fact a single chain labeled $1,2,\ldots,n$ from bottom to top, whereas $P_1$ is an $N$-shape.

Note that the linear extensions of $P$ are a disjoint union of those of $P_1$ and of $P_2$, namely $\mathcal{L}(P)
=\mathcal{L}(P_1) \sqcup \mathcal{L}(P_2)$. Let $M_P$ be the transition matrix for $P$, and build a block diagonal matrix
$M_{P_1\cup P_2}$ from the transition matrices $M_{P_1}$ and $M_{P_2}$ (a trivial block) in such a way
that the rows and columns of $M_P$ and $M_{P_1 \cup P_2}$ are labeled in the same order 
(with the identity linear extension of $P_2$ labeling the last row and column).

\begin{example}
\label{example.N}
Let $n=4$ and $k=2$. Then
  \begin{displaymath}
    P  =\vcenter{\hbox{\includegraphics{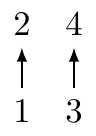}}}, \qquad
    P_1=\vcenter{\hbox{\includegraphics{Fig/P-N1.pdf}}}, \qquad
    P_2=\vcenter{\hbox{\includegraphics{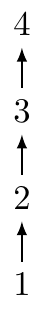}}}\,.
  \end{displaymath}
  The linear extensions of $P$ split into those of $P_1$ and of $P_2$ respectively: $\mathcal{L}(P_1)= \{3412,
  3142, 3124, 1342, 1324\}$ and $\mathcal{L}(P_2)=\{1234\}$. With this labeling
  \begin{displaymath}
    M_P = \frac1{16}\left(\!\tiny
      \begin{array}{rrrrrr}
        8 & 4 & 2 & 2 & 0 & 0 \\
        4 & 4 & 3 & 3 & 2 & 0 \\
        2 & 3 & 6 & 0 & 3 & 2 \\
        2 & 3 & 0 & 6 & 3 & 2 \\
        0 & 2 & 3 & 3 & 4 & 4 \\
        0 & 0 & 2 & 2 & 4 & 8
      \end{array}\!\right), \quad
    M_{P_1 \cup P_2} = 
    \frac1{16} \left(\!\tiny
      \begin{array}{rrrrr|r}
        8 & 4 & 2 & 2 & 0 & 0 \\
        4 & 4 & 3 & 3 & 2 & 0 \\
        2 & 3 & 6 & 1 & 4 & 0 \\
        2 & 3 & 1 & 6 & 4 & 0 \\
        0 & 2 & 4 & 4 & 6 & 0 \\
        \hline
        0 & 0 & 0 & 0 & 0 & 16
      \end{array}\!\right).
  \end{displaymath}
\end{example}

\begin{theorem}
  \label{theorem.N}
  Let $P_1$ be the $N$-shape poset as above. Then
  \[
  	\lambda_2(P_1)\leq \left(1+\frac{1}{n} \right) \left(1-\frac{2}{n} \right).
  \]
\end{theorem}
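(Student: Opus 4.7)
The plan is to bootstrap Proposition~\ref{proposition.direct_sums_of_chains}, which applies to the direct-sum-of-chains poset $P$, and to transfer its spectral bound onto $M_{P_1}$ via a low-rank interlacing against the auxiliary block-diagonal matrix $M_{P_1\cup P_2}$.

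First, since $P$ is a direct sum of two chains, Proposition~\ref{proposition.direct_sums_of_chains} gives $\lambda_2(M_P) = (1+\tfrac{1}{n})(1-\tfrac{2}{n})$ together with the non-negativity of the spectrum of $M_P$. Second, since $|\mathcal{L}(P_2)| = 1$, the $P_2$-block of $M_{P_1\cup P_2}$ is the scalar $1$, so the spectrum of $M_{P_1\cup P_2}$ is exactly the spectrum of $M_{P_1}$ together with one additional copy of the eigenvalue $1$. Arranged in weakly decreasing order this yields $\lambda_3(M_{P_1\cup P_2}) = \lambda_2(M_{P_1})$, so it suffices to bound $\lambda_3(M_{P_1\cup P_2})$.

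The technical heart of the argument is then a rank-and-signature analysis of the perturbation $E := M_{P_1\cup P_2} - M_P$. Both matrices are symmetric (each equals $N^t N$ in the form recalled in the introduction, and so is reversible with uniform stationary distribution). The operators $\tau_\ell$ under $P$ and under $P_1\cup P_2$ coincide on a linear extension $\pi$ \emph{except} when $\{\pi_\ell,\pi_{\ell+1}\} = \{k,k+1\}$: under $P$ the swap is performed (they are incomparable), while under $P_1$ (where $k+1\prec k$) and under $P_2$ (where $k\prec k+1$) it is not. A careful accounting of how this single local discrepancy propagates through the products $T_{i,j}=\tau_i\tau_{i+1}\cdots\tau_{j-1}$, as is already visible in Example~\ref{example.N}, should show that $E$ has rank at most $2$ and is of indefinite signature (one positive and one negative eigenvalue).

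With that structural input, I would invoke Wilkinson's interlacing theorem in the indefinite rank-two case (equivalently, Weyl's inequality applied successively to the two signed rank-one summands of $E$). The resulting estimate is $\lambda_{i+1}(M_P) \leq \lambda_i(M_{P_1\cup P_2}) \leq \lambda_{i-1}(M_P)$ for every admissible $i$. Specializing to $i=3$ and combining with the identification above gives
\[
\lambda_2(M_{P_1}) \;=\; \lambda_3(M_{P_1\cup P_2}) \;\leq\; \lambda_2(M_P) \;=\; \Bigl(1+\tfrac{1}{n}\Bigr)\Bigl(1-\tfrac{2}{n}\Bigr),
\]
which is the desired bound. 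The hard part is the rank-and-signature claim for $E$: one has to track the long words $T_{i,j}$ precisely at those "interface" linear extensions in which $k$ and $k+1$ occupy consecutive positions, and verify that every discrepancy can be absorbed into a single positive and a single negative rank-one update. Once that combinatorial bookkeeping is settled, the spectral conclusion is immediate from the Wilkinson-type interlacing.
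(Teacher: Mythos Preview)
Your proposal is correct and follows essentially the same route as the paper: compare $M_P$ with the block-diagonal $M_{P_1\cup P_2}$, show the difference is an indefinite rank-two perturbation $\tau cc^t - \tau dd^t$, apply Wilkinson's theorem twice to get $\lambda_3(M_{P_1\cup P_2})\le\lambda_2(M_P)$, and then identify $\lambda_3(M_{P_1\cup P_2})=\lambda_2(M_{P_1})$. The one tactical point the paper adds for the ``hard part'' you flag is to compute the perturbation at the level of the random-to-top matrices, writing $M_{P_1\cup P_2}-M_P = E^tN_P+N_P^tE+E^tE$ with $E=N_{P_1\cup P_2}-N_P$; since $E$ itself is supported on very few rows this makes the explicit identification of $c$ and $d$ straightforward.
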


Note that the eigenvalues of $M_{P_1\cup P_2}$ are the same as the eigenvalues of $M_{P_1}$ with an
additional one. Our strategy for the proof of Theorem~\ref{theorem.N} is to relate the eigenvalues of $M_{P_1\cup P_2}$
to those of $M_P$ using Wilkinson's theorem. Denote the eigenvalues of a symmetric matrix $M\in \mathbb{R}^{n\times n}$
by
\[
	\lambda_1(M) \ge \lambda_2(M) \ge \cdots \ge \lambda_n(M)\,.
\]

\begin{theorem} [Wilkinson~\cite{wilkinson.1965, dancis_davis.1987}]
\label{theorem.wilkinson}
Suppose $B = A +\tau c c^T$, where $A\in \mathbb{R}^{n\times n}$ is symmetric, $c\in \mathbb{R}^{n \times 1}$ has
unit norm, and $\tau \in \mathbb{R}$. If $\tau\ge 0$, then
\[
	\lambda_i(B) \in [\lambda_i(A),\,\lambda_{i-1}(A)] \qquad (2\le i\le n)\,,
\]
whereas if $\tau\le 0$
\[
	\lambda_i(B) \in [\lambda_{i+1}(A),\,\lambda_i(A)] \qquad (1\le i\le n-1)\,.
\]
\end{theorem}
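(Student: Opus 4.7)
The approach is to deduce Wilkinson's theorem from the Courant--Fischer min--max characterization of eigenvalues, combined with a short dimension-counting argument. I would first handle the case $\tau \ge 0$, and then reduce the $\tau \le 0$ case to it by interchanging the roles of $A$ and $B$.

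For $\tau \ge 0$, the perturbation $\tau c c^T$ is positive semidefinite of rank one. The lower bound $\lambda_i(A) \le \lambda_i(B)$ is the easy half and follows directly from monotonicity: for any subspace $V$ and any $x \in V$,
\[
x^T B x \;=\; x^T A x + \tau (c^T x)^2 \;\ge\; x^T A x,
\]
so Courant--Fischer applied to both $A$ and $B$ gives the inequality after taking the appropriate min over $V$ and max over $i$-dimensional subspaces.

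For the upper bound $\lambda_i(B) \le \lambda_{i-1}(A)$, I would fix any $i$-dimensional subspace $V$ and intersect it with the hyperplane $c^\perp := \{x : c^T x = 0\}$, obtaining a subspace of dimension at least $i-1$. Pick any $(i-1)$-dimensional subspace $W \subseteq V \cap c^\perp$. On $W$ the quadratic forms associated to $A$ and $B$ agree (since $c^T x = 0$ kills the rank-one term), and therefore
\[
\min_{0 \ne x \in V} \frac{x^T B x}{x^T x} \;\le\; \min_{0 \ne x \in W} \frac{x^T B x}{x^T x} \;=\; \min_{0 \ne x \in W} \frac{x^T A x}{x^T x} \;\le\; \lambda_{i-1}(A),
\]
where the last step uses Courant--Fischer for $A$ with the test subspace $W$ of dimension $i-1$. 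Taking the maximum over all $i$-dimensional $V$ gives $\lambda_i(B) \le \lambda_{i-1}(A)$.

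The case $\tau \le 0$ reduces to the first case by writing $A = B + (-\tau) c c^T$ with $-\tau \ge 0$, and viewing $B$ now as the unperturbed matrix. The bounds already established yield $\lambda_i(B) \le \lambda_i(A) \le \lambda_{i-1}(B)$ for $2 \le i \le n$; reindexing the second inequality via $i \mapsto i+1$ gives $\lambda_{i+1}(A) \le \lambda_i(B) \le \lambda_i(A)$ for $1 \le i \le n-1$, which is precisely the claimed interlacing. The only mildly delicate step is the dimension count for $V \cap c^\perp$ in the upper bound above: one needs the elementary fact that intersecting an $i$-dimensional subspace with a hyperplane drops the dimension by at most one, so that $W$ of dimension $i-1$ on which $A$ and $B$ agree can always be found. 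Aside from this, the proof is pure bookkeeping around min--max.
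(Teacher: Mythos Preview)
Your proof via the Courant--Fischer min--max principle is correct and is in fact the standard modern argument for this interlacing result. Note, however, that the paper does not supply its own proof of this theorem: it is quoted from the literature (Wilkinson~\cite{wilkinson.1965}, Dancis--Davis~\cite{dancis_davis.1987}) and used as a black box in the proof of Theorem~\ref{theorem.N}. So there is no in-paper argument to compare against; you have simply filled in a proof the authors chose to cite rather than reproduce.

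One small remark: your argument nowhere uses the hypothesis that $c$ has unit norm, and indeed the result holds for arbitrary $c$ (the norm can be absorbed into $\tau$). Also, in the lower-bound paragraph your phrase ``taking the appropriate min over $V$ and max over $i$-dimensional subspaces'' is slightly garbled---you mean: for each fixed $i$-dimensional $V$ the Rayleigh quotient of $B$ dominates that of $A$ pointwise, hence the min over $V$ does too, and then take the max over all such $V$. The mathematics is fine; only the prose could be tightened.
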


\begin{proof}[Proof of Theorem~\ref{theorem.N}]
Let $N_P$ and $N_{P_1 \cup P_2}$ be the random-to-top transition matrices for the posets $P$ and
$P_1 \cup P_2$, respectively. They only differ in the way linear extensions of the form
\[
	\pi^{(1,i)} = 1,2,\ldots, i-1,\; k+1,\; i, \ldots, k,\; k+2, \ldots ,n \quad  (\text{for $1\le i\le k$})\,
\]
together with the identity transition to either
\[
	\pi^{(0)} = 1,\ldots, k-1,\; k+1,\ldots, n,\; k
\]
or the identity. More concretely, the matrix $E = N_{P_1\cup P_2} - N_P$ has entries $1/n$ in row $\pi^{(0)}$
and columns $\pi^{(1,i)}$, entries $-1/n$ in the identity (last) row and columns $\pi^{(1,i)}$, entry $-k/n$ in
row $\pi^{(0)}$ and the identity (last) column, and entry $k/n$ in the identity row and column.

Now consider
\begin{multline*}
	M_{P_1\cup P_2} = N_{P_1 \cup P_2}^t N_{P_1\cup P_2} = (N_P+E)^t(N_P+E)\\
	 = M_P + E^tN_P + N_P^tE + E^tE\,.
\end{multline*}
Since $E$ and $N_P$ are known explicitly, it is straightforward to compute $E^tN_P + N_P^tE + E^tE$.
Namely, define the linear extensions
\[
	\pi^{(2,i)}= 1,\ldots, k-1,\; k+1, \ldots, i,\; k,\; i+1, \ldots, n, \quad \text{(for $k+1\le i\le n$)}.
\]
Then $E^tN_P + N_P^tE + E^tE$ contains entries $1/n^2$ (resp. $-(n-k)/n^2$) in rows $\pi^{(1,i)}$ and columns
$\pi^{(2,j)}$ (resp. identity), entries $1/n^2$ (resp. $-k/n^2$) in rows $\pi^{(2,i)}$ and columns $\pi^{(1,j)}$ (resp. identity),
and entries $-(n-k)/n^2$ (resp. $-k/n^2$ or $2k(n-k)/n^2$) in the identity row and columns $\pi^{(1,i)}$
(resp. $\pi^{(2,i)}$ or identity). When $\pi^{(1,i)}$ and $\pi^{(2,j)}$ are equal (namely for $i=k$ and $j=k+1$),
the just stated entries are added together.

The matrix $E^tN_P + N_P^tE + E^tE$ can further be written as the sum
of two rank one matrices. Namely
\[
	M_{P_1\cup P_2} = M_P + \tau cc^t - \tau dd^t\,,
\]
where $\tau=1/2k(n-k)n^2$, $c \in \mathbb{R}^{n\times 1}$ has entry $n-k$ (resp. $k$ or $-2k(n-k)$)
in rows $\pi^{(1,i)}$ (resp. $\pi^{(2,j)}$ or identity), and $d \in \mathbb{R}^{n\times 1}$ has entry $n-k$
(resp. $-k$) in rows $\pi^{(1,i)}$ (resp. $\pi^{(2,j)}$). Again, entries are added when $\pi^{(1,i)}$ and $\pi^{(2,j)}$
are equal.

We are now going to apply Wilkinson's Theorem~\ref{theorem.wilkinson} twice with
$A=M_P$, $B_1= M_P - \tau dd^t$, and $B_2 = M_{P_1\cup P_2} =  B_1 + \tau cc^t$.
Since $\tau>0$, we obtain
\begin{equation*}
\begin{split}
	&\lambda_i(B_1) \in [\lambda_{i+1}(A),\lambda_i(A)]\,,\\
	&\lambda_i(B_2) \in [\lambda_i(B_1),\lambda_{i-1}(B_1)]\,,
\end{split}
\end{equation*}
so that in particular $\lambda_3(B_2)\le \lambda_2(B_1)\le \lambda_2(A)$.
Recall that by Proposition~\ref{proposition.direct_sums_of_chains},
$\lambda_2(A) = \lambda_2(P)\le (1+1/n)(1-2/n)$. The matrix $B_2=M_{P_1\cup P_2}$
is block diagonal; the block corresponding to $P_2$ consists of a 1 and contributes an eigenvalue 1.
This implies that $\lambda_2(P_1)=\lambda_3(P_1\cup P_2) = \lambda_3(B_2)$. Combining these inequalities
yields $\lambda_2(P_1) \le (1+1/n)(1-2/n)$ as desired.
\end{proof}

In principle a similar approach can be applied for other posets besides the $N$-shape posets, by removing one cover
relation $a\prec b$ from a poset $P_1$ to obtain poset $P$ and by defining $P_2$ by adding the relation $b\prec a$
instead. The linear extensions of $P$ are again a disjoint union of the linear extensions of $P_1$ and $P_2$, so that
one can compare the block diagonal matrix for $P_1$ and $P_2$ with the matrix for $P$.  Computer experimentation
indicates that the two matrices still differ by sums of rank 1 matrices. 
However, the number thereof increases and, in general, it is harder to control bounds for $\lambda_2$ by
repeated use of Wilkinson's theorem.

\begin{problem}
Can one generalize the idea of the proof for $N$-shaped posets, possibly using Wilkinson's theorem, to prove 
Conjecture~\ref{conjecture.main}?
\end{problem}

\subsection{Existence of $(1+1/n)(1-2/n)$ as eigenvalue}
\label{subsection.existence}
Next we show that Proposition~\ref{proposition.direct_sums_of_chains} implies that 
$(1+1/n)(1-2/n)$ is an eigenvalue for any disconnected poset. To this end, let
$P$ be a finite poset and $Q$ a direct sum of chains obtained by
linearly ordering each connected component of $P$ (this is not unique, but we just pick one such
linear ordering). Define the map
\[
	\sort \colon \L(P) \to \L(Q)
\]
which takes a linear extension $\pi \in \L(P)$ and returns the
linear extension of $Q$ obtained by sorting the elements within each
connected component according to the corresponding chain.

We will now show that the $\sort$ map is a \textit{contraction} or \textit{lumping} \cite{levin_peres_wilmer.2009} 
from the Markov chain on $\L(P)$ to the Markov chain on $\L(Q)$. In our context, 
this means the following. Suppose $\Pi,\Sigma \in \L(Q)$. 
For every $\sigma, \sigma' \in \L(P)$ such that $\sort(\sigma) = \sort(\sigma') = \Sigma$, 
there exist $\pi,\pi' \in \L(P)$ (possibly equal) such that $\sort(\pi) = \sort(\pi') = \Pi$ and
$M(\sigma,\pi) = M(\sigma',\pi')$. We then define the {\em lumped} Markov chain on
$\L(Q)$ by setting $M(\Sigma,\Pi) = M(\sigma,\pi)$.
Equivalently, the ``sort'' action always commutes with the ``Markov chain'' action.

\begin{proposition}
  The map $\sort$ commutes with the operators $\tau_i$, that is, $\sort \circ \tau_i = \tau_i \circ \sort$.
  This implies that  $M_Q$ is a lumping (quotient) of $M_P$. In particular, any
  eigenvalue for $M_Q$ lifts to an eigenvalue of $M_P$.
\end{proposition}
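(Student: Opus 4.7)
The plan is to first establish the commutation $\sort \circ \tau_i = \tau_i \circ \sort$ via a case analysis on whether $\pi_i$ and $\pi_{i+1}$ lie in the same connected component of $P$, then extend this to the operators $T_{i,j}$, and finally deduce the lumping statement and the eigenvalue lifting as formal consequences. Throughout, for $\pi \in \L(P)$ and a connected component $C$ of $P$, we write $S_C^\pi \subseteq [n]$ for the set of positions occupied in $\pi$ by elements of $C$. Note that $S_C^{\sort(\pi)} = S_C^\pi$ since $\sort$ only permutes elements within each component, and $S_C^{\pi s_i} = S_C^\pi$ whenever $s_i$ swaps two positions inside the same component.

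Case 1 ($\pi_i, \pi_{i+1}$ in the same component $C$): both positions $i$ and $i+1$ lie in $S_C^{\sort(\pi)}$, and since no integer lies strictly between them they are consecutive in $S_C^{\sort(\pi)}$. Hence $\sort(\pi)_i$ and $\sort(\pi)_{i+1}$ are consecutive elements of the chain of $C$ in $Q$, hence comparable, so $\tau_i$ fixes $\sort(\pi)$. On the other side, $\pi \cdot \tau_i \in \{\pi, \pi s_i\}$, and both have the same per-component position sets as $\pi$, so $\sort(\pi \cdot \tau_i) = \sort(\pi)$. Both sides of the commutation therefore equal $\sort(\pi)$.

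Case 2 ($\pi_i \in C_1$ and $\pi_{i+1} \in C_2$ with $C_1 \neq C_2$): the two elements are incomparable both in $P$ and in $Q$, so both sides perform a genuine swap and it suffices to check that $\sort(\pi s_i) = \sort(\pi)\, s_i$. This is the main (but modest) obstacle: one observes that $S_{C_1}^{\pi s_i} = (S_{C_1}^\pi \setminus \{i\}) \cup \{i+1\}$, and since $i$ and $i+1$ are consecutive integers with $i+1 \notin S_{C_1}^\pi$, the rank of $i+1$ in $S_{C_1}^{\pi s_i}$ equals the rank of $i$ in $S_{C_1}^\pi$; symmetrically for $C_2$. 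Consequently $\sort$ assigns to position $i+1$ of $\pi s_i$ the same chain element of $C_1$ that it assigns to position $i$ of $\pi$, and likewise position $i$ receives the element $\sort(\pi)_{i+1}$; positions outside $\{i, i+1\}$ are entirely unaffected.

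The commutation for the $\tau_i$ extends immediately to $\sort \circ T_{i,j} = T_{i,j} \circ \sort$ since each $T_{i,j}$ is a product of $\tau_k$'s. Using~\eqref{equation.transition matrix}, for any $\sigma \in \L(P)$ and $\Pi \in \L(Q)$,
\[
\sum_{\pi \in \sort^{-1}(\Pi)} M_P(\sigma,\pi) \;=\; \frac{1}{n^2}\,\bigl|\{(i,j) : \sort(\sigma) \cdot T_{i,j} = \Pi\}\bigr| \;=\; M_Q(\sort(\sigma),\Pi),
\]
which depends on $\sigma$ only through $\sort(\sigma)$; this is precisely the lumping condition with induced matrix $M_Q$. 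Finally, if $M_Q w = \lambda w$, the identity above gives $(M_P\,(w \circ \sort))(\sigma) = \sum_\Pi M_Q(\sort(\sigma),\Pi)\, w(\Pi) = \lambda\,(w \circ \sort)(\sigma)$, so $w \circ \sort$ is a $\lambda$-eigenvector of $M_P$, proving that every eigenvalue of $M_Q$ lifts to an eigenvalue of $M_P$.
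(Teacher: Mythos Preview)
Your proof is correct and follows essentially the same case analysis as the paper's proof---same component versus different components---though you supply considerably more detail, in particular an explicit derivation of the lumping identity and the eigenvector lift $w \circ \sort$, which the paper leaves implicit.
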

\begin{proof}
  For $\pi \in \L(P)$, consider $\pi \cdot \tau_i$. If $\pi_i$ and $\pi_{i+1}$ belong to the same connected component
  of $P$, then $\tau_i$ acts trivially on $\sort(\pi)$. But in this case $\sort(\pi \cdot \tau_i) = \sort(\pi)$, so that indeed
  $\sort(\pi \cdot \tau_i) = \sort(\pi) \cdot \tau_i$.
  Otherwise $\pi_i$ and $\pi_{i+1}$ belong to different connected components and $\tau_i$ exchanges $\pi_i$ and $\pi_{i+1}$.
  Linearly ordering the vertices of $P$ commutes with interchanging the labels of two nodes in different components. 
  This proves the claim.
\end{proof}

\begin{corollary}
  Let $P$ be a disconnected poset with $|P|=n$. Then $(1+1/n)(1-2/n)$ is an
  eigenvalue of $M_P$.
\end{corollary}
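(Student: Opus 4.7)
The plan is to combine Proposition~\ref{proposition.direct_sums_of_chains} with the lumping proposition that immediately precedes the corollary. Given a disconnected poset $P$ of size $n$, I would first construct an auxiliary poset $Q$ by linearly ordering each connected component of $P$, exactly as in the definition of the $\sort$ map. Because $P$ has at least two connected components, $Q$ is a direct sum of at least two chains; in particular, $Q$ is itself disconnected and nontrivial.

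Next, I would apply Proposition~\ref{proposition.direct_sums_of_chains}, which establishes Conjecture~\ref{conjecture.main} in full for direct sums of chains. The equality clause of that conjecture, specialized to the disconnected direct sum of chains $Q$, yields
\[
\lambda_2(Q) = \left(1+\frac{1}{n}\right)\left(1-\frac{2}{n}\right).
\]
In particular, $(1+1/n)(1-2/n)$ is genuinely an eigenvalue of $M_Q$, realized by some function $f\colon \L(Q)\to \mathbb{R}$.

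Finally, I would invoke the lumping proposition: since $\sort$ commutes with each $\tau_i$, it also commutes with each product $T_{i,j}$, and therefore intertwines $M_P$ and $M_Q$. Concretely, the pullback $f\circ \sort\colon \L(P)\to \mathbb{R}$ is a nonzero eigenvector of $M_P$ with the same eigenvalue $(1+1/n)(1-2/n)$. This gives the desired conclusion.

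There is essentially no obstacle beyond what is already handled: the nontrivial content sits entirely in the two results quoted above, namely the determination of $\lambda_2$ for disconnected direct sums of chains via symmetric group representation theory, and the verification that $\sort$ intertwines the $\tau_i$. The only point worth double-checking is that $Q$ is really disconnected, which is immediate since connected components of $P$ map bijectively to the chains of $Q$.
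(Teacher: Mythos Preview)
Your proposal is correct and follows essentially the same approach as the paper: invoke Proposition~\ref{proposition.direct_sums_of_chains} to see that $(1+1/n)(1-2/n)$ is an eigenvalue of $M_Q$ for the disconnected direct sum of chains $Q$, and then use the lumping proposition to lift that eigenvalue to $M_P$. The paper is terser, citing the general fact that eigenvalues of a lumped chain embed in those of the original, whereas you spell out the pullback $f\circ\sort$ explicitly; but the argument is the same.
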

\begin{proof}
It is well-known that
the set of eigenvalues of the contracted or lumped Markov chain
is contained in the set of eigenvalues of the original chain. 
See, for example,~\cite[Section 1.1]{caputo_liggett_richthammer.2010}.
The claim then follows by Proposition~\ref{proposition.direct_sums_of_chains}.
\end{proof}

\subsection{Further properties}
\label{subsection.properties}
We now discuss properties that are suggested by the
data in Figure~\ref{figure.second_largest_eigenvalue}. 
Recall that the {\em dual} $\overline P$ of a poset $P$ is 
defined as the poset on the same elements 
such that $x\preceq y$ in $\overline P$ if and only if $y\preceq x$ in $P$.
One first observes that the second largest 
eigenvalue for a poset and its dual coincide. 
In fact, the Markov chains are isomorphic.

\begin{proposition}
  Let $P$ be a poset and $\overline P$ its dual. Then reversing
  linear extensions yields an isomorphism between the
  $P$- and $\overline P$-random-to-random shuffle Markov chain.
\end{proposition}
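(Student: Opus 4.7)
The plan is to show that the reversal map $\rho \colon \L(P) \to \L(\overline P)$, sending $\pi = \pi_1 \pi_2 \cdots \pi_n$ to $\pi_n \pi_{n-1} \cdots \pi_1$, provides the desired isomorphism. First I would verify that $\rho$ is a well-defined bijection: if $\pi \in \L(P)$ and $\overline{\pi}_i := \pi_{n+1-i}$, then for $i < j$ we must check that $\overline\pi_i \prec \overline\pi_j$ in $\overline P$ never violates linearity, which is immediate since $\overline\pi_i \prec \overline\pi_j$ in $\overline P$ means $\pi_{n+1-j} \prec \pi_{n+1-i}$ in $P$, consistent with $n+1-j < n+1-i$.

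The heart of the argument is the following commutation statement: for every $i \in \{1,\dots,n-1\}$,
\begin{equation*}
  \rho(\pi \cdot \tau_i) = \rho(\pi) \cdot \tau_{n-i}.
\end{equation*}
This holds because incomparability in $P$ coincides with incomparability in $\overline P$, so $\tau_i$ acts nontrivially on $\pi$ exactly when $\tau_{n-i}$ acts nontrivially on $\rho(\pi)$; and swapping positions $i$ and $i+1$ in $\pi$ corresponds, after reversal, to swapping positions $n-i$ and $n-i+1$ in $\rho(\pi)$. Iterating and using the definition of $T_{i,j}$, this upgrades to
\begin{equation*}
  \rho(\pi \cdot T_{i,j}) = \rho(\pi) \cdot T_{n+1-i,\,n+1-j} \quad \text{for all } i,j \in [n],
\end{equation*}
where one checks the two cases $i \le j$ and $i > j$ separately; the key observation is that reversing the sequence of simple moves $\tau_i \tau_{i+1} \cdots \tau_{j-1}$ yields $\tau_{n-i} \tau_{n-i-1} \cdots \tau_{n-j+1}$, which is precisely $T_{n+1-i,\,n+1-j}$.

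Finally, since $(i,j) \mapsto (n+1-i,\,n+1-j)$ is a bijection of $[n]\times[n]$, the preceding identity yields
\begin{equation*}
  \bigl|\{(i,j) \mid \pi' = \pi \cdot T_{i,j}\}\bigr| = \bigl|\{(i',j') \mid \rho(\pi') = \rho(\pi) \cdot T_{i',j'}\}\bigr|,
\end{equation*}
so by~\eqref{equation.transition matrix}, $M_P(\pi,\pi') = M_{\overline P}(\rho(\pi),\rho(\pi'))$ for all $\pi,\pi' \in \L(P)$. Combined with the bijectivity of $\rho$, this is exactly the statement that $\rho$ is an isomorphism of Markov chains. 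The only delicate step is getting the position-bookkeeping under reversal right, and the cleanest way to handle it is to establish the $\tau_i$ commutation first and bootstrap everything else from the definitions.
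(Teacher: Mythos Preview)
Your proof is correct and follows essentially the same approach as the paper: establish first that reversal conjugates $\tau_i$ to $\tau_{n-i}$ via the incomparability argument, then extend by composition to $T_{i,j}\mapsto T_{n+1-i,\,n+1-j}$ and conclude. Your version is slightly more explicit (you verify that $\rho$ is a well-defined bijection and spell out the counting of pairs $(i,j)$), but the underlying argument is identical.
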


\begin{proof}
  Remark first that reversal of linear extensions conjugates $\tau_i$
  on $\L(P)$ to $\tau_{n-i}$ on $\L(\overline P)$: for $\pi$ a linear
  extension of $P$ and $\overline \pi$ the linear extension of
  $\overline P$ obtained by reversing $\pi$, one has
  $\overline{\pi \cdot \tau_i} = \overline \pi \cdot \tau_{n-i}$; indeed $\tau_i$
  swaps the values $a$ and $b$ at positions $i$ and $i+1$ in $\pi$ if
  and only if $a$ and $b$ are incomparable in $P$ if and only if $a$
  and $b$ are incomparable in $\overline P$ if and only if
  $\tau_{n-i}$ swaps the values $a$ and $b$ at positions $n-i+1$ and
  $n-i$ in $\overline \pi$.
  Therefore, by composition, the reversal of linear extensions
  conjugates the transition matrix $M_{i,j}$ on $\L(P)$ to $M_{n-i+1,n-j+1}$ on
  $\L(\overline P)$. The statement follows.
\end{proof}

The data in Figure~\ref{figure.second_largest_eigenvalue} also
suggests that the second largest eigenvalue decreases when adding
comparability edges. Namely, in Figure~\ref{figure.second_largest_eigenvalue} the posets are partially ordered
with respect to inclusion of comparisons: $P\subseteq Q$ if for some
labelling of the vertices of $P$ and $Q$, $a\prec b$ in $P$ implies $a\prec b$
in $Q$ for all $a,b$. Almost everywhere $\lambda_2$ is order-reversing: 
$P\subseteq Q$ implies that $\lambda_2(P)\geq \lambda_2(Q)$. 
The smallest counterexample occurs for $n=5$ and is unique
up to duality. Namely take $P$ the second leftmost poset in the fourth
row, and $Q$ the leftmost poset in the third row.

\begin{problem}
Although the order-reversal property does not hold in general, one could hope that it holds under certain 
additional conditions. Could one use this refined property, together with the fact that disconnected posets 
are at the bottom of the inclusion poset, to prove Conjecture~\ref{conjecture.main}? For $N$-shaped posets, 
this is exactly the approach we took in the proof of Theorem~\ref{theorem.N}.
\end{problem}

\begin{figure}
  \begin{subfigure}{\textwidth}
    \caption{$n=1,2,3$}
    \centering
    \includegraphics[scale=.8]{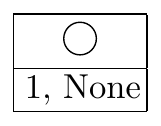}\hfil
    \includegraphics[scale=.8]{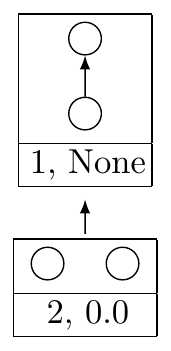}\hfil
    \includegraphics[scale=.8]{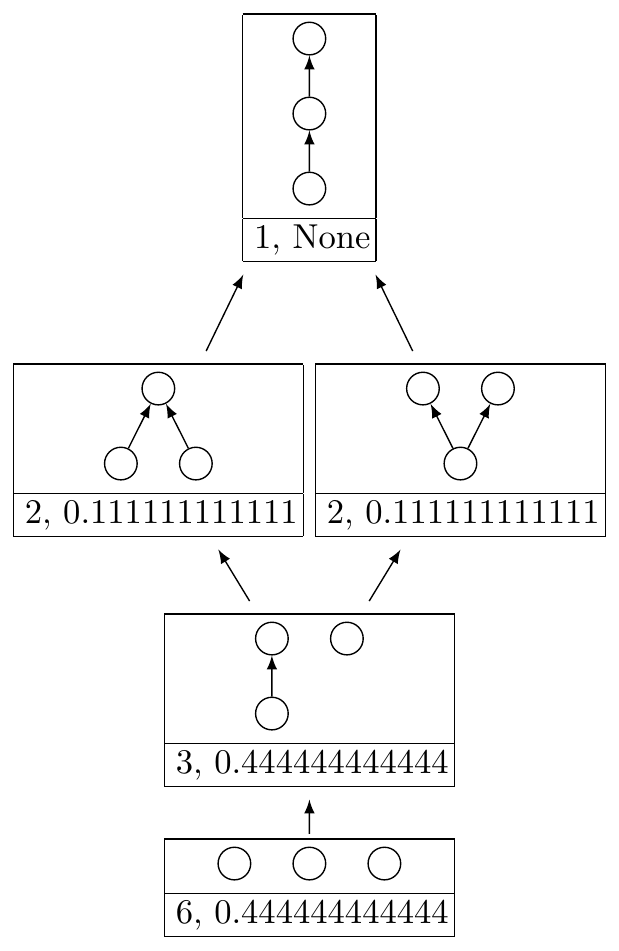}\hfil
  \end{subfigure}
  \caption{The second largest eigenvalue for all posets of size $1\le
    n \le 5$, preceded each time by the number of linear
    extensions. For each $n$, the posets are ordered by inclusion of
    comparison relations.  For comparison, for $n=2,3,4,5$, the
    conjectured bound $(1+\frac1n)(1-\frac2n)$ on the second largest
    eigenvalue is given by $0.0,\,0.44\overline{4},\,0.625,\,0.72$,
    respectively.}
  \label{figure.second_largest_eigenvalue}
\end{figure}

\begin{figure}
  \ContinuedFloat
  \begin{subfigure}{\textwidth}
    \caption{$n=4$}
    \includegraphics[height=\textheight]{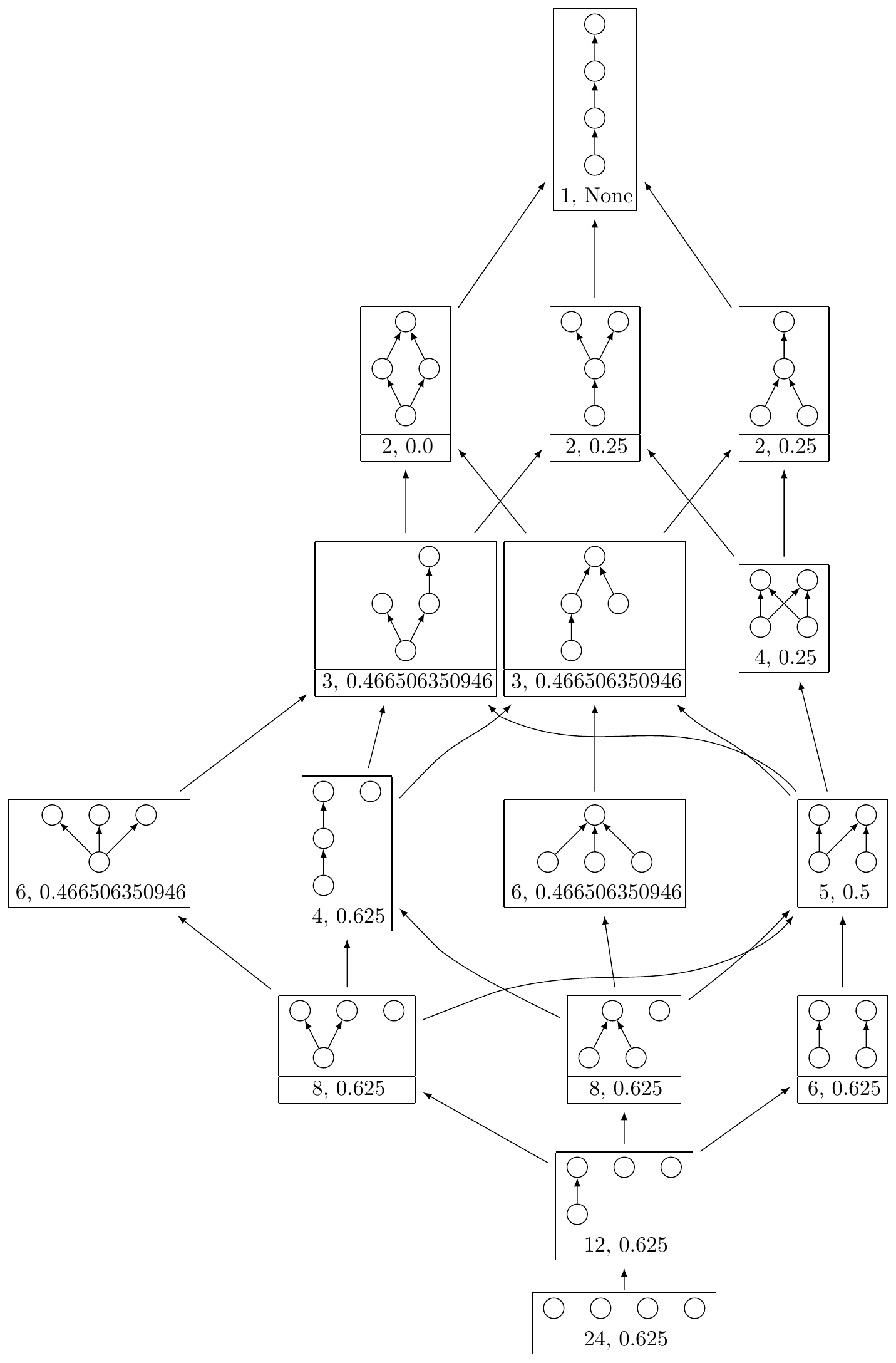}\hfil
  \end{subfigure}
\end{figure}

\begin{figure}
  \ContinuedFloat
  \begin{subfigure}{\textwidth}
    \caption{$n=5$}
    \begin{bigcenter}
      \includegraphics[width=.9\paperwidth]{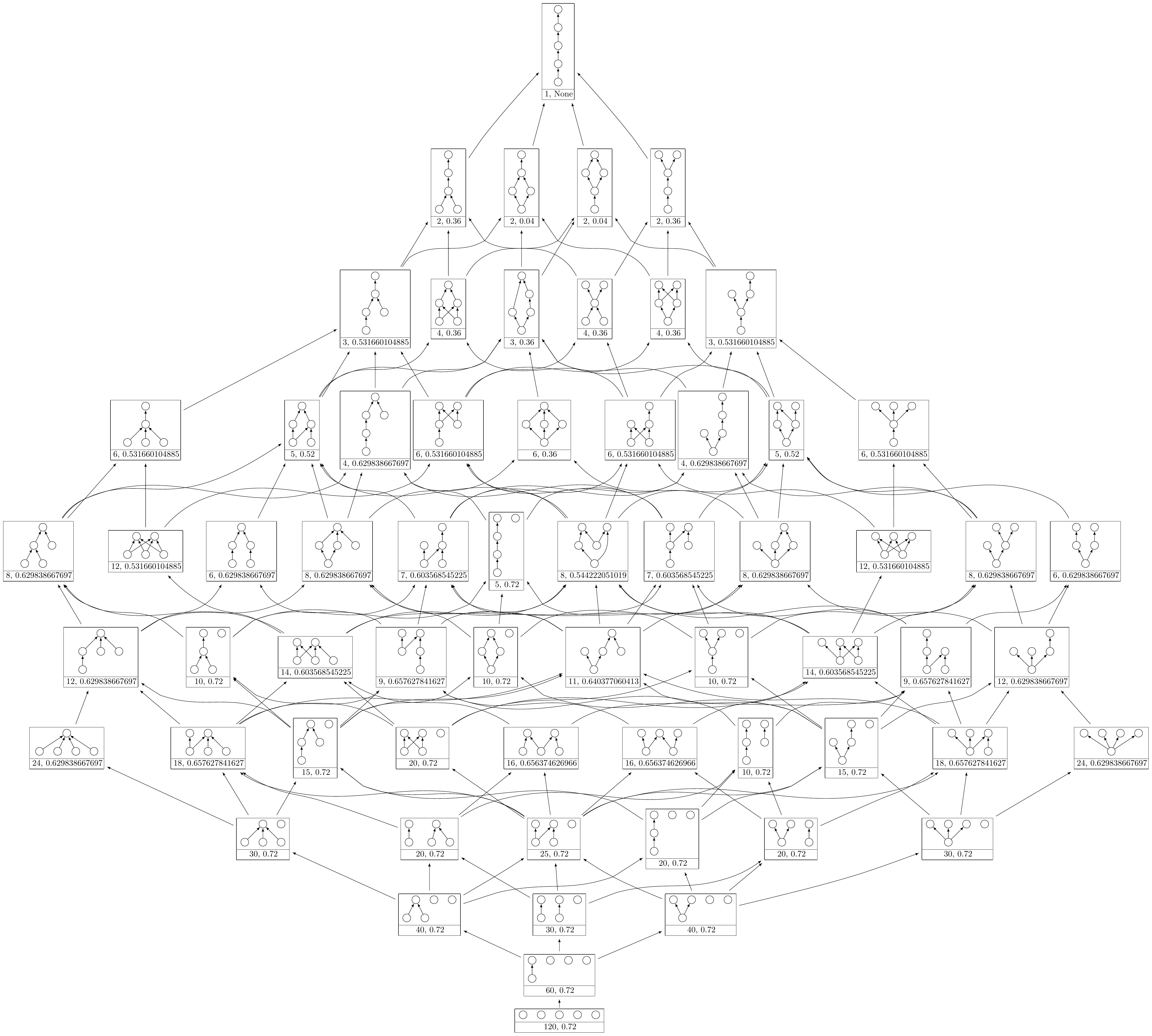}\hfil
    \end{bigcenter}
  \end{subfigure}
\end{figure}

\subsection{On the diameter of the Markov chain}
\label{subsection.diameter}

A necessary condition for a small mixing time is that the states of the system are not
too far away from each other; namely $t_{\mix}(\varepsilon) \geq L/2$,
where $L$ is the \emph{diameter} of the graph of the Markov chain,
that is the largest graph distance between any two states (see
e.g.~\cite{levin_peres_wilmer.2009}, Equation (7.3)).
We now show that the diameter of the $P$-random-to-random Markov chain
is small enough, despite its size, to not be an obstruction to
Conjecture~\ref{conjecture.improve}.

\begin{proposition}
  Let $P$ be a poset of size $n$. Then the diameter of the
  $P$-random-to-random shuffle Markov chain is bounded above by $n$.
\end{proposition}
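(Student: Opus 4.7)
The plan is to exhibit, for any two linear extensions $\pi,\sigma\in\L(P)$, a sequence of at most $n$ operators $T_{i,j}$ transforming $\pi$ into $\sigma$. I would use a greedy ``insertion sort'' procedure that places the entries of $\sigma$ one by one into their correct positions. Set $\pi^{(0)}:=\pi$, and for $k=1,2,\dots,n-1$ construct $\pi^{(k)}$ from $\pi^{(k-1)}$ as follows: let $p$ be the position of $\sigma_k$ in $\pi^{(k-1)}$; if $p=k$, take $\pi^{(k)}=\pi^{(k-1)}$ and use no operator at this step; otherwise, apply $T_{p,k}$. Since the first $k-1$ positions of $\pi^{(k-1)}$ already agree with $\sigma$, one has $p\geq k$, so only a leftward $T_{p,k}$ is needed. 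After $n-1$ steps the first $n-1$ entries agree with $\sigma$ and the remaining entry is forced, so $\pi^{(n-1)}=\sigma$, yielding a path of length at most $n-1\leq n$.

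The heart of the proof is verifying that $T_{p,k}$, which unfolds as $\tau_{p-1}\tau_{p-2}\cdots\tau_k$, genuinely carries $\sigma_k$ from position $p$ down to position $k$ rather than being blocked by some $\tau_j$ acting as the identity on a comparable pair. This amounts to showing that each element $x=\pi^{(k-1)}_j$ for $k\leq j\leq p-1$ is incomparable with $\sigma_k$ in $P$. Since $x$ appears before $\sigma_k$ in the linear extension $\pi^{(k-1)}$, we cannot have $\sigma_k\prec x$. Conversely, if $x\prec\sigma_k$ in $P$, then $x$ would have to appear before $\sigma_k$ in $\sigma$ as well, putting $x\in\{\sigma_1,\dots,\sigma_{k-1}\}$; but those elements already occupy positions $1,\dots,k-1$ of $\pi^{(k-1)}$, contradicting $j\geq k$. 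Hence $x$ and $\sigma_k$ are incomparable, so the $\tau_j$ swap in question is an honest transposition, and a straightforward induction on the number of swaps in the product $T_{p,k}$ shows that the net effect is to slide $\sigma_k$ from position $p$ to position $k$ (shifting each intermediate element rightward by one).

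The main obstacle, minor as it is, is exactly the incomparability argument above: one has to exploit both the fact that $\pi^{(k-1)}$ is a linear extension and the fact that $\sigma$ is a linear extension. Everything else is bookkeeping. Putting the pieces together, at most $n-1$ nontrivial operators $T_{i,j}$ are used, so the diameter of the $P$-random-to-random Markov chain is bounded by $n-1$, hence by $n$.
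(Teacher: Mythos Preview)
Your proposal is correct and follows essentially the same insertion-sort strategy as the paper's proof: both fix the target linear extension $\sigma$ and repeatedly use a single operator $T_{p,k}$ to bring the next needed value into place, arguing that the intermediate entries are incomparable to it. Your version is in fact more careful, as you explicitly verify both directions of the incomparability (using that $\pi^{(k-1)}$ and $\sigma$ are each linear extensions), and you observe that only $n-1$ operators are needed, slightly sharpening the stated bound of $n$.
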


\begin{proof}
  Take two linear extensions $\pi$ and $\pi'$. Without loss of
  generality, we may assume that $P$ is labeled by $1,\ldots,n$
  according to the second linear extension $\pi'$. Let us try to sort
  $\pi$ to $\pi'$ using the operators $T_{i,j}$.

  Assume that the first $i-1$ positions are sorted, that is,
  $\pi$ starts with $1,2,\ldots,i-1$. We want to move $i$, which is at
  some position $j\geq i$ in $\pi$ to the $i$-th position. Given that
  $\pi'$ is a linear extension and starts with $1,\ldots,i$, the
  values at positions $i,\ldots,j-1$ are incomparable to $i$. Therefore
  applying the operator $T_{j,i}$ moves $i$ at position $j$ to position $i$, as
  desired.

  Applying induction on $i$ shows that $\pi'$ is reachable from $\pi$
  by the application of at most $n$ operators.
\end{proof}

\bibliographystyle{alpha}
\bibliography{main}

\newcommand{\etalchar}[1]{$^{#1}$}
\begin{thebibliography}{LPW09}

\bibitem[AKS14]{ayyer_klee_schilling.2014}
Arvind Ayyer, Steven Klee, and Anne Schilling.
\newblock Combinatorial {M}arkov chains on linear extensions.
\newblock {\em J. Algebraic Combin.}, 39(4):853--881, 2014.

\bibitem[BD99]{bubley_dyer.1999}
Russ Bubley and Martin Dyer.
\newblock Faster random generation of linear extensions.
\newblock {\em Discrete Math.}, 201(1-3):81--88, 1999.

\bibitem[CLR10]{caputo_liggett_richthammer.2010}
Pietro Caputo, Thomas~M. Liggett, and Thomas Richthammer.
\newblock Proof of {A}ldous' spectral gap conjecture.
\newblock {\em J. Amer. Math. Soc.}, 23(3):831--851, 2010.

\bibitem[DD87]{dancis_davis.1987}
Jerome Dancis and Chandler Davis.
\newblock An interlacing theorem for eigenvalues of selfadjoint operators.
\newblock {\em Linear Algebra Appl.}, 88/89:117--122, 1987.

\bibitem[Dia03]{diaconis.2003}
Persi Diaconis.
\newblock Mathematical developments from the analysis of riffle shuffling.
\newblock In {\em Groups, combinatorics \& geometry ({D}urham, 2001)}, pages
  73--97. World Sci. Publ., River Edge, NJ, 2003.

\bibitem[DS15]{dieker_saliola.2014}
A.~B. Dieker and Franco Saliola.
\newblock Spectral analysis of random-to-random {M}arkov chains.
\newblock {\em preprint arXiv:1509.08580}, 2015.

\bibitem[DSC95]{diaconis_saloff-coste.1995}
P.~Diaconis and L.~Saloff-Coste.
\newblock Random walks on finite groups: a survey of analytic techniques.
\newblock In {\em Probability measures on groups and related structures, {XI}
  ({O}berwolfach, 1994)}, pages 44--75. World Sci. Publ., River Edge, NJ, 1995.

\bibitem[LPW09]{levin_peres_wilmer.2009}
David~A. Levin, Yuval Peres, and Elizabeth~L. Wilmer.
\newblock {\em Markov chains and mixing times}.
\newblock American Mathematical Society, Providence, RI, 2009.
\newblock With a chapter by James G. Propp and David B. Wilson.

\bibitem[MQ14]{morris_qin.2014}
Ben Morris and Chuan Qin.
\newblock Improved bounds for the mixing time of the random-to-random insertion
  shuffle.
\newblock {\em preprint arXiv:1412.0070}, 2014.

\bibitem[Rey02]{reyes.2002}
Jay-Calvin~Uyemura Reyes.
\newblock {\em Random walk, semi-direct products, and card shuffling}.
\newblock ProQuest LLC, Ann Arbor, MI, 2002.
\newblock Thesis (Ph.D.)--Stanford University.

\bibitem[S{\etalchar{+}}13]{sage}
W.\thinspace{}A. Stein et~al.
\newblock {\em {S}age {M}athematics {S}oftware ({V}ersion 5.9)}.
\newblock The Sage~Development Team, 2013.
\newblock {\tt http://www.sagemath.org}.

\bibitem[Sag01]{sagan.2001}
Bruce~E. Sagan.
\newblock {\em The symmetric group}, volume 203 of {\em Graduate Texts in
  Mathematics}.
\newblock Springer-Verlag, New York, second edition, 2001.
\newblock Representations, combinatorial algorithms, and symmetric functions.

\bibitem[SCc08]{sage-combinat}
The {S}age-{C}ombinat community.
\newblock {S}age-{C}ombinat: enhancing {S}age as a toolbox for computer
  exploration in algebraic combinatorics, 2008.
\newblock {\tt http://combinat.sagemath.org}.

\bibitem[SCZ08]{saloff_coste_zuniga.2008}
L.~Saloff-Coste and J.~Z{\'u}{\~n}iga.
\newblock Refined estimates for some basic random walks on the symmetric and
  alternating groups.
\newblock {\em ALEA Lat. Am. J. Probab. Math. Stat.}, 4:359--392, 2008.

\bibitem[Sta09]{stanley.2009}
Richard~P. Stanley.
\newblock Promotion and evacuation.
\newblock {\em Electron. J. Combin.}, 16(2, Special volume in honor of Anders
  Bjorner):Research Paper 9, 24, 2009.

\bibitem[Sub13]{subag.2013}
Eliran Subag.
\newblock A lower bound for the mixing time of the random-to-random insertions
  shuffle.
\newblock {\em Electron. J. Probab.}, 18:no. 20, 20, 2013.

\bibitem[Wil65]{wilkinson.1965}
J.~H. Wilkinson.
\newblock {\em The algebraic eigenvalue problem}.
\newblock Clarendon Press, Oxford, 1965.

\end{thebibliography}
\end{document}